\theoremstyle{plain}
\newtheorem{thm}{Theorem}
\newtheorem{cor}[thm]{Corollary}
\newtheorem{lem}[thm]{Lemma}
\newtheorem{prop}[thm]{Proposition}
\newcommand{\ZZ}{\mathbb{Z}}
\newcommand{\CC}{\mathbb{C}}
\newcommand{\Ann}[2]{\mathrm{Ann}_{#1}({#2})}
\newcommand{\m}{\mathfrak{m}}
\newcommand{\MaxSpec}[1]{\mathrm{MaxSpec}(#1)}
\newcommand*{\longhookrightarrow}{\ensuremath{\lhook\joinrel\relbar\joinrel\rightarrow}}
\title{A note on a paper by  Cuadra, Etingof and Walton}
\author{Christian Lomp and Deividi Pansera}
\address{Department of Mathematics, Faculty of Science, University of Porto, Rua Campo Alegre 687, 4169-007 Porto, Portugal}
\subjclass{12E15; 13A35; 16T05; 16W70}
\keywords{semisimple Hopf algebras, inner faithful action, iterated Ore extensions}
\thanks{We would like to thank Chelsea Walton and Pavel Etingof very much for many clarifications about their papers \cite{EtingofWalton, CuadraEtingofWalton}. In particular we would like to thank Chelsea Walton for her careful reading of our manuscript and her constructive comments.
Thanks go also to the very detailed and constructive comments of the referee. Further thanks go to Ken Brown, Paula Carvalho, Jörg Feldvoss, André Leroy and Jurek Matczuk for helpful discussions. Both authors were partially supported by CMUP (UID/MAT/00144/2013), which is funded by FCT (Portugal) with national (MEC) and European structural funds through the programs FEDER, under the partnership agreement PT2020. The second author was also supported by CAPES, Coordination of Superior Level Staff Improvement - Brazil.}
\begin{document}

\begin{abstract}
We analyse the proof of the main result of a paper by Cuadra, Etingof and Walton, which says that any action of a semisimple Hopf algebra $H$ on the $n$th Weyl algebra $A=A_n(K)$ over a field $K$ of characteristic $0$ factors through a group algebra. We verify that their methods can be used to show that any action of a semisimple Hopf algebra $H$ on an iterated Ore extension of derivation type $A=K[x_1;d_1][x_2;d_2][\cdots][x_n;d_n]$ in characteristic zero factors through a group algebra.
\end{abstract}

\maketitle

The purpose of this note is to analyse the main result of the paper \cite{CuadraEtingofWalton} which says that any action of a semisimple Hopf algebra $H$ on the $n$th Weyl algebra $A=A_n(K)$ over a field $K$ of characteristic $0$ factors through a group algebra. The central idea is to pass from algebras in characteristic $0$ to algebras in positive characteristic by using the subring $R$ of $K$, generated by all structure constants of $H$ and the action on $A$ and by passing to a finite field $R/\m$. 

It has already been  outlined in \cite[p.2]{CuadraEtingofWalton} that these methods could be used to establish more general results on semisimple Hopf actions on quantized algebras and that the authors of \cite{CuadraEtingofWalton} will do so in their future work. In particular it  has been announced in \cite[p.2]{CuadraEtingofWalton} that their methods will apply to actions on module algebras A such that the resulting algebra $A_p$ when passing to a field of  characteristic $p$, for large $p$,  is PI and their PI-degree  is a power of $p$. Such algebras include universal enveloping algebras of finite dimensional Lie algebras and algebras of differential operators of smooth irreducible affine varieties.

We will verify part of their outlined program in Theorem \ref{MainResultCEW} and will show that any action of a semisimple Hopf algebra $H$ on an enveloping algebra of a finite dimensional Lie algebra or on an iterated Ore extension of derivation type $A=K[x_1;d_1][x_2;d_2][\cdots][x_n;d_n]$ in characteristic zero factors through a group algebra. Apart from this we give an alternative proof for the reduction step in Proposition \ref{reduction2p}.

\section{Inner faithful action}

Let $H$ be a finite dimensional Hopf algebra over a field $K$ and let $A$ be a (left) $H$-module algebra.
One says that a Hopf algebra $H$ acts {\it inner faithfully} on a left $H$-module algebra $A$ if $I\cdot A \neq 0$ for any non-zero Hopf ideal $I$ of $H$. For any Hopf algebra $H$ acting and a left $H$-module algebra $A$ there exists a largest Hopf ideal $I$ of $H$ such that $I\cdot A=0$ and $H/I$ acts inner faithfully on $A$, i.e. any Hopf action factors through an inner faithful action. 
The next Lemma is essentially contained in the proof of \cite[Proposition 2.4]{CuadraEtingofWalton}:

\begin{lem}[Cuadra-Etingof-Walton]\label{innerfaithfully} A finite dimensional Hopf algebra $H$ acts inner faithfully on an algebra $A$ if and only if  $A^{\otimes n}$ is a faithful left $H$-module for some $n>0$.
\end{lem}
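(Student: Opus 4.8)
The plan is to prove both implications directly in terms of annihilators. Write $\rho_n\colon H\to\End{K}{A^{\otimes n}}$ for the algebra homomorphism describing the $H$-module structure on the $n$-fold tensor power (the diagonal action through the iterated coproduct $\Delta^{(n-1)}$), so that $\Ann{H}{A^{\otimes n}}=\Ker{\rho_n}$, and note that $A^{\otimes n}$ is faithful precisely when $\Ker{\rho_n}=0$. For the implication ``if'', suppose $A^{\otimes n}$ is faithful and let $I$ be a Hopf ideal with $I\cdot A=0$, i.e.\ $I\subseteq\Ker{\rho_1}$. A routine induction on $n$ using that $I$ is a coideal gives $\Delta^{(n-1)}(I)\subseteq\sum_{i=1}^{n}H^{\otimes(i-1)}\otimes I\otimes H^{\otimes(n-i)}$; since $\rho^{\otimes n}$ kills each summand (as $I\subseteq\Ker{\rho_1}$), we get $I\cdot A^{\otimes n}=0$, hence $I=0$. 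Thus $H$ acts inner faithfully.

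For the converse, assume $H$ acts inner faithfully. Because $A$ is a module \emph{algebra}, $K1_A$ is a trivial $H$-submodule of $A$, so $v\mapsto v\otimes 1_A$ is an injective $H$-module map $A^{\otimes n}\hookrightarrow A^{\otimes(n+1)}$ and therefore $\Ker{\rho_{n+1}}\subseteq\Ker{\rho_n}$. As $\dim_K H<\infty$, this descending chain of two-sided ideals stabilizes: $I:=\Ker{\rho_N}=\Ker{\rho_n}$ for all $n\ge N$. I would then check that $I$ is a Hopf ideal. It is a two-sided ideal (kernel of an algebra map), and $\varepsilon(I)=0$ since $I\subseteq\Ker{\rho_1}\subseteq\Ann{H}{K1_A}=\Ker{\varepsilon}$. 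For the coideal property one uses the identity $\Ann{H}{M\otimes M'}=\Delta^{-1}\bigl(\Ann{H}{M}\otimes H+H\otimes\Ann{H}{M'}\bigr)$, valid because the action on a tensor product of $H$-modules factors through $\Delta$ (together with $\Ker{f\otimes g}=\Ker{f}\otimes W+V\otimes\Ker{g}$); applying it to $M=M'=A^{\otimes N}$ and $A^{\otimes N}\otimes A^{\otimes N}=A^{\otimes 2N}$ yields $I=\Delta^{-1}(I\otimes H+H\otimes I)$, i.e.\ $\Delta(I)\subseteq I\otimes H+H\otimes I$.

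The only point requiring genuine care is that $I$ is stable under the antipode $S$ -- equivalently, that the finite-dimensional bialgebra $\bar H:=H/I$ is a Hopf algebra. My plan for this: let $\pi\colon H\to\bar H$ be the projection and work in the \emph{finite-dimensional} convolution algebra $\mathrm{Hom}_K(H,\bar H)$. The map $f\mapsto f\circ\pi$ is an injective, unital algebra homomorphism $\mathrm{Hom}_K(\bar H,\bar H)\to\mathrm{Hom}_K(H,\bar H)$ (because $\pi$ is a surjective coalgebra map) sending $\mathrm{id}_{\bar H}$ to $\pi$; moreover $\pi$ is \emph{invertible} in $\mathrm{Hom}_K(H,\bar H)$ with two-sided inverse $\pi\circ S$, since $(\pi S)\star\pi=\eta_{\bar H}\varepsilon_H=\pi\star(\pi S)$. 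Now an element of a unital subalgebra of a finite-dimensional algebra that is invertible in the ambient algebra is invertible already in the subalgebra (left multiplication by it restricts to an injective, hence bijective, endomorphism of the subalgebra), so $\mathrm{id}_{\bar H}$ has a convolution inverse in $\mathrm{Hom}_K(\bar H,\bar H)$; that inverse is an antipode for $\bar H$, and since it maps to $\pi S$ under the embedding we get $\pi\circ S$ factoring through $\bar H$, i.e.\ $S(I)\subseteq I$. (Dually one could instead quote that a finite-dimensional sub-bialgebra of a Hopf algebra is a Hopf subalgebra, applied to $(H/I)^{*}\subseteq H^{*}$.) With $I$ now a Hopf ideal satisfying $I\cdot A=0$, inner faithfulness forces $I=0$, i.e.\ $\Ker{\rho_N}=0$ and $A^{\otimes N}$ is a faithful $H$-module.

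I expect the antipode-stability of the stabilized annihilator $I$ to be the main obstacle; the remaining steps are bookkeeping with the iterated coproduct and with the behaviour of annihilators under tensor products, all of which is routine.
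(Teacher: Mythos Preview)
Your argument is correct and follows the same architecture as the paper's proof: stabilize the descending chain of annihilators $\Ker{\rho_n}$ at some $I$, show $I$ is a bi-ideal via the identity $\Ann{H}{M\otimes M'}=\Delta^{-1}(\Ann{H}{M}\otimes H+H\otimes\Ann{H}{M'})$ applied to $M=M'=A^{\otimes N}$, then upgrade $I$ to a Hopf ideal and invoke inner faithfulness. The only genuine difference is in the antipode step. The paper dualizes and quotes Radford's result that a sub-bialgebra of a finite-dimensional Hopf algebra is automatically a Hopf subalgebra, applied to $(H/I)^{*}\subseteq H^{*}$; you instead give a self-contained convolution-algebra argument in $\mathrm{Hom}_K(H,\bar H)$, using that $\pi$ is invertible there with inverse $\pi\circ S$ and that invertibility descends to the finite-dimensional subalgebra $\mathrm{Hom}_K(\bar H,\bar H)$. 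Your argument is in effect a direct proof of the cited fact, so the two approaches are equivalent; yours avoids the external reference at the cost of a few extra lines, and you even note the dual citation as an alternative.
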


\begin{proof} The statement follows from the proof of  \cite[Proposition 2.4]{CuadraEtingofWalton}, which we will sketch here: Let  $\mathrm{Ann}_H(M)$ denote the annihilator of a left $H$-module $M$. For any $m>0$ set $K_m=\Ann{H}{A^{\otimes m}}$, where $A^{\otimes m}$ is a left $H$-module by the diagonal $H$-action.  Since $A^{\otimes m}$ embeds into $A^{\otimes n}$ as left $H$-module, whenever $m\leq n$,  we can conclude that $K_m\supseteq K_{n}$. Clearly the descending chain of ideals $K_m$ stabilises at some index $n$ as $H$ is finite dimensional. Hence $K_m=K_n=K_{2n}:=K$ for all $m\geq n$.  Thus the annihilator of $A^{\otimes n}\otimes A^{\otimes n}$ with the component wise action of $H\otimes H$ is equal to $H\otimes K + K\otimes H$. Since 
$$ 0 = K \cdot A^{\otimes 2n} = \Delta(K)\cdot (A^{\otimes n} \otimes A^{\otimes n}), $$ 
we get $\Delta(K)\subseteq H\otimes K + K\otimes H$. As $\epsilon(K)1_A = K\cdot 1_A = 0$, $K$ is a coideal. 
Thus $K$ is a bi-ideal and $(H/K)^*$ is a sub-bialgebra of the finite dimensional Hopf algebra $H^*$. By \cite[7.6.1]{Radford}, $(H/K)^*$ is a Hopf subalgebra and hence $K$ is a Hopf ideal of $H$. As $H$ acts inner faithfully on $A$, $K=0$, i.e. $A^{\otimes n}$ is a faithful $H$-module.

For the converse, note that if $I$ is a Hopf ideal that annihilates $A$, then it would also annihilate $A^{\otimes n}$. Hence $I=0$.
\end{proof}

\section{Reduction to a finitely generated $\ZZ$-algebra}\label{ReductionSection1}

Let $H$ be a semisimple Hopf algebra over a field $K$ of characteristic $0$. By a classical result of Larson and Radford, $H$ is also cosemisimple. Suppose that $H$ acts on a domain $A$ which is finitely presented as an $K$-algebra, i.e. 
$A\simeq K\langle x_1,\ldots, x_n\rangle/P,$ with $P$ a finitely generated completely prime ideal of the free algebra $K\langle x_1, \ldots, x_n \rangle$. 
The structure constants of $H$ are the constants that define the Hopf algebra structure of $H$ over $K$. First of all, since $H$ is semisimple over a field, $H$ is finite dimensional, say of dimension $d$ and hence has a $K$-basis $\{b_1, \ldots, b_d\}$. We might assume that $1_H$ is one of the basis vectors. The Hopf algebra structure of $H$ is determined by a set of constants $\mu_k^{i,j}, \eta_{i,j}^k, \nu_j^i$ and $\epsilon(b_i)$ such that for all $1\leq i,j,k\leq d$:
$$b_i \cdot b_j = \sum_{k=1}^d {\mu_k^{i,j}}\: b_k, \qquad \Delta(b_k)=\sum_{i,j=1}^d {\eta_{i,j}^k}\: b_i \otimes b_j, \qquad S(b_i)=\sum_{j=1}^d {\nu_{j}^{i}}\: b_j.$$
As $H$ is semisimple and cosemisimple, there exists a left integral $t=\sum_{i=1}^d \tau_i b_i$ in $H$ and a left integral $t^*=\sum_{i=1}^d \tau_i^* b_i^*$ in $H^*$ with $\epsilon(t)=1$ and $t^*(1)=1$ for some $\tau_i, \tau_i^* \in K$, where $\{b_1^*, \ldots, b_d^*\}$ denotes the dual basis of $H$. Moreover the action of $H$ on $A$ is determined by the images of the action of the basis elements $b_i$ on the algebra generators $\overline{x_j}=x_j+P$ of $A$, i.e. 
$$ b_i \cdot \overline{x_j} = f_{ij}(\overline{x_1},\ldots, \overline{x_n}),$$
where $f_{ij}$ are non-commutative polynomials in $K\langle x_1, \ldots, x_n\rangle$.
Since $P$ is finitely generated, there are non-commutative polynomials $p_1,\ldots, p_m$ such that $P=\langle p_1, \ldots, p_m\rangle$.
We consider now the subring $R$ of $K$ generated by all constants
\begin{equation}\label{structureconstants}{\mu_k^{i,j}}, \:  {\eta_{i,j}^k}, \: {\nu_{j}^{i}}, \: \epsilon(b_i), \: \tau_i, \:  \tau_i^*, \: \mbox{coefficients of }f_{ij}, \: \mbox{coefficients of } p_1, \ldots, p_m.\tag{$\star$} \end{equation}
We call $R$ the {\it ring of structure constants of the action of $H$ on $A$}.

Let $H_R = \bigoplus_{i=1}^d Rb_i$. Then $H_R$ is a $R$-Hopf algebra with structure constants (\ref{structureconstants}). Let $A_R = R\langle x_1, \ldots, x_n\rangle/ {P'}$, where $P'$ is the ideal of $R\langle x_1, \ldots, x_n\rangle$ generated by $p_1,\ldots, p_m$. The action of $H$ on $A$ yields now an action of $H_R$ on $A_R$, i.e. we have a ring homomorphism $H_R \rightarrow \mathrm{End}_R(A_R).$  Since $t \in H_R$ still satisfies $\epsilon(t)=1$, $H_R$ is separable over $R$. Analogously as  $t^* \in H_R^*$  satisfies $t^*(1)=1$, $H_R^*$ is separable over $R$ (see \cite{KadisonStolin,Lomp}). Note that $H\simeq H_R {\otimes_R} K$ and  $A\simeq A_R {\otimes_R} K$ as algebras. Hence $A_R$ is again a domain. Moreover, $A_R$ shares the same properties of $A$, i.e., $A_R$ is finitely generated and finitely presented. 

\section{$R$ is a Hilbert Ring}\label{ReductionSection2}

Since $K$ is a field of characteristic $0$, $R$ is an integral domain that contains the integers $\ZZ$. Let $a_1, \ldots, a_s$ be a set of generators of $R$ as $\ZZ$-algebra and consider the surjective ring homomorphism
$$\varphi: \ZZ[y_1, \ldots, y_s] \rightarrow R, \qquad y_i \mapsto a_i.$$
Recall that a {\it Hilbert ring} (or Jacobson ring) is a ring $R$ such that  any prime ideal is the intersection of maximal ideals (see \cite{Goldman, Krull}). As $\ZZ$ is a Hilbert ring,  and Hilbert rings are closed under forming polynomial rings and factor rings,  $R$ is a Hilbert ring as well. In particular the prime ideal $0$ is equal to the Jacobson radical $\mathrm{Jac}(R)$ of $R$. If $\m$ is any maximal ideal of $R$, then $\varphi^{-1}(\m)$ is a maximal ideal of $\ZZ[y_1, \ldots, y_s]$ and hence 
$\varphi^{-1}(\m)\cap \ZZ $ is a maximal ideal of $\ZZ$ by \cite[Theorem 5]{Goldman}, i.e. there exists a prime number $p$ such that $\varphi^{-1}(\m)\cap \ZZ  = p\ZZ$. In particular $R/\m$ has positive characteristic $p$. By Noether normalization, $R/\m$ is a finite field extension of the prime subfield $\ZZ/p\ZZ$, i.e. $R/\m$ is a finite field.

\begin{prop}\label{reduction2p}
 Let $R$ be an integral domain of characteristic $0$ with $\mathrm{Jac}(R)=0$ such that $\mathrm{char}(R/\m)>0$ for all maximal ideals $\m$ of $R$. Let $H$ be an $R$-algebra that is free of finite rank over $R$. For any faithful  left $H$-module  $M$ that is free as an $R$-module and number $q>1$, there exists a set of maximal ideals $Y$ such that 
 \begin{enumerate}
 \item  $M/\m M$ is a faithful $H/\m H$-module for any $\m \in Y$;
 \item $\mathrm{char}(R/\m)>q$ for any $\m \in Y$;
 \item  the canonical homomorphism of $R$-algebras $ \Psi: H_R \longrightarrow  \prod_{\m\in Y} H/\m H$
 is injective.
 \end{enumerate}
\end{prop}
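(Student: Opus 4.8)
The plan is to exhibit a single nonzero $a\in R$ such that $M/\m M$ is a faithful $H/\m H$-module for \emph{every} maximal ideal $\m$ with $a\notin\m$, and then to take for $Y$ the set of such $\m$ that moreover have residue characteristic $>q$; the real content will be that $\bigcap_{\m\in Y}\m=0$. Fix once and for all an $R$-basis $u_1,\dots,u_r$ of $H$. Since $R$ maps centrally into $H$, the $H$-action on $M$ is $R$-linear, so for every maximal ideal $\m$ of $R$ the submodule $\m M$ is $H$-stable and $M/\m M$ is a module over $H/\m H=\bigoplus_{i=1}^r (R/\m)\,\overline{u_i}$.

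First I would detect faithfulness on a finite-rank piece. Set $F=\mathrm{Frac}(R)$ and $V=H\otimes_R F$, a finite-dimensional $F$-vector space acting on $M\otimes_R F$. For $m\in M$, let $W_m\subseteq V$ be the subspace of those $v$ with $v\cdot(m\otimes1)=0$ in $M\otimes_R F$. If $v\in\bigcap_{m\in M}W_m$, write $v=s^{-1}(h\otimes1)$ with $h\in H$, $0\neq s\in R$; then $(h\cdot m)\otimes1=0$ for all $m\in M$, hence $h\cdot m=0$ for all $m$ (as $M$ is $R$-free, $M\hookrightarrow M\otimes_R F$), hence $h\in\mathrm{Ann}_H(M)=0$ and $v=0$. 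Thus $\bigcap_{m\in M}W_m=0$, and since $\dim_F V<\infty$ already finitely many $W_m$ meet in $0$: pick $m_1,\dots,m_t\in M$ with $\bigcap_{j=1}^t W_{m_j}=0$. Equivalently, the $R$-linear map $\phi\colon H\to M^t$, $h\mapsto(h\cdot m_j)_j$, is injective after tensoring with $F$. The finitely many elements $u_i\cdot m_j$ span a free $R$-module direct summand $N$ of $M$ of some finite rank $s$, so $\phi$ corestricts to $H\to N^t\cong R^{st}$, given in the basis $(u_i)$ by a matrix $A\in\mathrm{Mat}_{st\times r}(R)$ of rank $r$ over $F$. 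Fix a nonzero $r\times r$ minor $a\in R$ of $A$.

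Next I would check that (1) holds whenever $a\notin\m$. Reducing $A$ modulo such an $\m$ keeps the chosen minor nonzero in the field $R/\m$, so $\overline{A}\in\mathrm{Mat}_{st\times r}(R/\m)$ has rank $r$ and is injective. But $\overline{A}$ is the matrix of the $R/\m$-linear map $H/\m H\to (N/\m N)^t$, $\overline{h}\mapsto(\overline{h}\cdot\overline{m_j})_j$, and $N/\m N\hookrightarrow M/\m M$ because $N$ is an $R$-module direct summand of $M$. Hence every nonzero element of $H/\m H$ acts nontrivially on $M/\m M$, i.e. $M/\m M$ is a faithful $H/\m H$-module.

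Finally I would choose $Y$ and establish (2)--(3). The key point is that for every nonzero $c\in R$ there is a maximal ideal $\m$ with $c\notin\m$ and $\mathrm{char}(R/\m)>q$. If not, every maximal ideal of $R$ contains $c$ or has residue characteristic one of the finitely many primes $p_1,\dots,p_k\le q$; then with $J_0=\bigcap_{\m\ni c}\m\supseteq cR$ and $J_i=\bigcap_{\m\ni p_i}\m\supseteq p_iR$ (empty intersections read as $R$) we get $0=\mathrm{Jac}(R)=J_0\cap J_1\cap\dots\cap J_k\supseteq J_0J_1\cdots J_k\supseteq (c\,p_1\cdots p_k)R$, so $c\,p_1\cdots p_k=0$, impossible in the domain $R$ of characteristic $0$. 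Now put $Y=\{\,\m\in\mathrm{MaxSpec}(R):a\notin\m,\ \mathrm{char}(R/\m)>q\,\}$: (1) holds by the previous paragraph, (2) by definition, and if $0\neq c\in\bigcap_{\m\in Y}\m$ then applying the key point to $ac\neq0$ yields some $\m\in Y$ with $c\notin\m$, a contradiction. Hence $\bigcap_{\m\in Y}\m=0$, and as $H=\bigoplus_i Ru_i$ this gives $\ker\Psi=\bigcap_{\m\in Y}\m H=\bigl(\bigcap_{\m\in Y}\m\bigr)H=0$, which is (3). I expect the delicate step to be the first paragraph: $M$ need not have finite $R$-rank, so faithfulness is a priori an infinite family of conditions, and it is the finite-dimensionality of $V=H\otimes_R F$ that reduces it to the single non-vanishing of $a$; the rest is elementary, using from the hypotheses only that $R$ is a domain of characteristic $0$ with $\mathrm{Jac}(R)=0$ and all residue characteristics positive.
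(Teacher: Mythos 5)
Your proof is correct, and at the strategic level it coincides with the paper's: both arguments manufacture a single nonzero element $a\in R$ whose nonvanishing in $R/\m$ certifies that $M/\m M$ is a faithful $H/\m H$-module, and both then use $\mathrm{Jac}(R)=0$ together with the positivity of all residue characteristics to show that the maximal ideals avoiding $a$ and of residue characteristic $>q$ intersect in zero. The tactics differ in two places. For the construction of $a$, the paper invokes Lemma \ref{VectorspaceLemma} over $\mathrm{Frac}(R)$ (linear independence of the action endomorphisms detected by finitely many vectors and functionals, packaged as a nonvanishing determinant) and then clears two layers of denominators ($C$ for the test vectors, $D$ for the functionals); you instead note that, $H\otimes_R F$ being finite dimensional, finitely many $m_j\in M$ already detect faithfulness, so $h\mapsto(h\cdot m_j)_j$ becomes an injective map into a finite-rank free piece and $a$ is simply a nonzero $r\times r$ minor of its matrix. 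This is cleaner: your test vectors lie in $M$ from the outset, and part (1) reduces to the survival of that minor modulo $\m$. Your ``key point'' (every nonzero $c$ avoids some maximal ideal of residue characteristic $>q$), proved via the product $c\,p_1\cdots p_k$, is precisely the content of the paper's Lemma \ref{infinitemaximals} in contrapositive form. One wording slip to fix: the elements $u_i\cdot m_j$ need not \emph{span} a free direct summand of $M$; what you want, and what the paper does with its finite subset $\Lambda'$ of a basis, is that they \emph{lie in} the finite-rank free direct summand $N$ spanned by the finitely many basis elements of $M$ occurring in their expansions. With that reading the corestriction $H\to N^t\cong R^{st}$ and the injection $N/\m N\hookrightarrow M/\m M$ are both legitimate, and the rest of your argument goes through.
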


Before we prove Proposition \ref{reduction2p}, we need two elementary Lemmas:

\begin{lem}\label{infinitemaximals}  Let $R$ be an integral domain of characteristic $0$ with $\mathrm{Jac}(R)=0$ such that $\mathrm{char}(R/\m)>0$ for all maximal ideals $\m$ of $R$. Then, given $0 \neq a\in R$ and an integer $q>1$, the following holds $$\bigcap_{\m \in X_{a,q}} \m = 0$$
for $X_{a,q} = \{ \m\in \MaxSpec{R} \mid \mathrm{char}(R/\m)>q \mbox{ and } a\not\in \m\}$.
\end{lem}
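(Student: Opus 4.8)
The plan is to argue by contradiction, packaging everything into a single application of the hypothesis $\mathrm{Jac}(R)=0$. Suppose the intersection is nonzero and pick $0\neq b\in\bigcap_{\m\in X_{a,q}}\m$. The only obstruction to invoking $\mathrm{Jac}(R)=0$ directly on $b$ is that we must steer clear of the finitely many ``small'' residue characteristics and of the ideals containing $a$; so we clear all of these at once. Let $p_1,\ldots,p_r$ be the (finitely many, and nonempty since $q\geq 2$) prime numbers not exceeding $q$, regarded as elements of $R$ via $\ZZ\subseteq R$, and set
$$ c \;=\; a\, b\, p_1\cdots p_r \;\in\; R. $$
Since $R$ is an integral domain of characteristic $0$, each factor is nonzero, hence $c\neq 0$.

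Because $\mathrm{Jac}(R)=0$, there is a maximal ideal $\m$ of $R$ with $c\notin\m$; as $\m$ is prime this forces $a\notin\m$, $b\notin\m$, and $p_i\notin\m$ for every $i$. By hypothesis $\mathrm{char}(R/\m)$ is positive, hence a prime number $p$ (as $R/\m$ is a field), and $p\cdot 1_R\in\m$, i.e.\ $p\in\m$. If $p\leq q$ then $p=p_i$ for some $i$, contradicting $p_i\notin\m$; therefore $p>q$. But then $\m\in X_{a,q}$, since $a\notin\m$ and $\mathrm{char}(R/\m)=p>q$, so the choice of $b$ gives $b\in\m$, contradicting $b\notin\m$. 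Hence $b=0$, and the intersection is trivial.

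The one genuine idea is the bundling of $a$, the hypothetical nonzero element $b$, and the finitely many small primes into a single nonzero element $c$, which converts the ``$\mathrm{char}>q$ and $a\notin\m$'' constraints into a single avoidance condition amenable to $\mathrm{Jac}(R)=0$; the remainder is routine bookkeeping with residue characteristics. I do not expect any serious obstacle: the finiteness of $\{\,p\text{ prime}:p\leq q\,\}$ legitimizes the product, and the characteristic-$0$ domain hypothesis is exactly what guarantees $c\neq 0$.
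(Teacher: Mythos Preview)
Your proof is correct and uses essentially the same idea as the paper: multiply $a$ by the product of the small primes to force any maximal ideal avoiding the product to lie in $X_{a,q}$. The paper argues slightly more indirectly, splitting into cases according to whether any $\m$ with $a\notin\m$ has small residue characteristic, and then concludes via the observation that two ideals in a domain with zero intersection must have one of them zero; your packaging of everything (including the hypothetical nonzero element $b$) into a single product $c$ and a direct contradiction is a cleaner execution of the same argument.
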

\begin{proof}
Set $X=\MaxSpec{R}$. 
For each $\m \in X$ let $p_{\m} = \mathrm{char}(R/\m)$. Set $B=\{\m \in X \mid a \not\in \m \text{ and } p_{\m} \leq q \}$. Now, if $B=\emptyset$, then $a \in \bigcap_{\m \in X\setminus X_{a,q}} \m $ and so, since by hypothesis $a\neq 0$ and $0=\mathrm{Jac}(R)=\left(\bigcap_{\m \in X\setminus X_{a,q}} \m\right) \cap \left(\bigcap_{\m \in X_{a,q}} \m\right)$, $R$ being a domain implies that $\bigcap_{\m \in X_{a,q}} \m = 0$  as we want. If $B \neq \emptyset$, then the set $\{p_\m \mid \m\in B\}$ is finite, say $\{p_{\m_1},\hdots ,p_{\m_n} \}$. We define $b = \prod_{i=1}^{n} p_{\m_i}$, which satisfies $b \in \bigcap_{\m \in B} \m.$
So, $ab\in \bigcap_{\m \in X\setminus X_{a,q}} \m$, because if $ \m \in X\setminus X_{a,q}$ and $a\not\in \m$ then  $p_{\m}\leq q$. Hence $p_{\m}$ divides $b$ and, as $p_{\m} \in \m$, we have $ab\in \m$. By hypothesis, $a\neq 0$ and $R$ being a domain implies $\bigcap_{\m \in X\setminus X_{a,q}} \m \neq 0$. However, $0=\mathrm{Jac}(R)=\left(\bigcap_{\m \in X\setminus X_{a,q}} \m\right) \cap \left(\bigcap_{\m \in X_{a,q}} \m\right)$ shows $\bigcap_{\m \in X_{a,q}} \m = 0$.
\end{proof}

\begin{lem}\label{VectorspaceLemma}
Let $V$ be a vector space over $K$ and let $h_1, \ldots, h_n$ be  endomorphisms of $V$.
Then $h_1, \ldots, h_n$ are linearly independent if and only if there exist  $v_1, \ldots, v_m \in V$ and  $f_{jk}\in V^*$ for $1\leq j \leq n$ and $1\leq k \leq m$ such that the matrix $$\left( \sum_{k=1}^m f_{jk}(h_i(v_k))\right)_{1\leq i,j \leq n}$$ has non-zero determinant.
\end{lem}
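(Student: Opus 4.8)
The plan is to recognize this as a statement about when a finite list of vectors in the (huge) vector space $\End{K}{V}$ is linearly independent, and to translate that into the existence of a dual functional that is nonzero on the corresponding exterior product. First I would dispose of the easy direction: if the displayed matrix has nonzero determinant for some choice of $v_1,\dots,v_m$ and $f_{jk}$, then a linear relation $\sum_i \lambda_i h_i = 0$ would force, upon applying $f_{j1},\dots,f_{jm}$ to the vectors $h_i(v_k)$ and summing over $k$, that the row vector $(\lambda_1,\dots,\lambda_n)$ lies in the left kernel of an invertible matrix, hence is zero. So independence is necessary.

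For the converse, assume $h_1,\dots,h_n$ are linearly independent in $\End{K}{V}$. The key point is that a linear functional on $\End{K}{V}$ of the special form $T \mapsto \sum_{k=1}^m f_k(T(v_k))$, with $v_k \in V$ and $f_k \in V^*$, is nothing but a "finite-rank" functional, and such functionals separate points of $\End{K}{V}$: indeed, if $T \neq 0$ pick $v$ with $T(v) \neq 0$ and then $f \in V^*$ with $f(T(v)) \neq 0$, so the rank-one functional $S \mapsto f(S(v))$ is already nonzero on $T$. I would then argue inductively: having chosen functionals $\phi_1,\dots,\phi_r$ of this form, the subspace of $\End{K}{V}$ cut out by $\phi_1 = \cdots = \phi_r = 0$ either meets $\mathrm{span}(h_1,\dots,h_n)$ only in $0$ — in which case the $n \times r$ matrix $(\phi_s(h_i))$ has rank $n$ and, after discarding columns, gives an $n \times n$ submatrix with nonzero determinant — or else there is a nonzero element in that intersection, on which some further finite-rank functional $\phi_{r+1}$ is nonzero, strictly enlarging the span of the $\phi_s$ restricted to $\mathrm{span}(h_i)$. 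Since $\mathrm{span}(h_1,\dots,h_n)$ is $n$-dimensional this process stops after at most $n$ steps with $n$ functionals $\phi_1,\dots,\phi_n$ whose restrictions to $\mathrm{span}(h_i)$ are linearly independent, i.e. with $\det(\phi_s(h_i))_{1\le i,s\le n}\neq 0$.

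Finally I would package the chosen functionals into the required form: each $\phi_s$ is a sum $\sum_{k} f^{(s)}_k(\,\cdot\,(v^{(s)}_k))$ over some finite index set; collecting all the vectors $v^{(s)}_k$ appearing across $s=1,\dots,n$ into a single list $v_1,\dots,v_m$ and setting $f_{sk} = f^{(s)}_k$ on the indices that occur for $\phi_s$ and $f_{sk}=0$ otherwise, we get $\phi_s(h_i) = \sum_{k=1}^m f_{sk}(h_i(v_k))$, which is exactly the matrix in the statement (after renaming the row index $s$ as $j$). The determinant is nonzero, completing the proof.

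The main obstacle, or rather the only subtlety worth care, is the inductive step in the converse: one must be sure that finite-rank functionals of the prescribed shape suffice to detect every nonzero element of $\End{K}{V}$ and that finitely many of them suffice to separate an $n$-dimensional subspace — both follow from the elementary rank-one separation observation above, so no infinite-dimensional functional-analytic input (no Hahn–Banach, no choice beyond picking finitely many vectors) is needed. The bookkeeping that merges the several finite sums into one common list of $v_k$'s, padding with zero functionals, is routine.
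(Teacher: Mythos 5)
Your proposal is correct in both directions: the easy implication is the standard left-kernel argument, and your converse does establish the claim. The route differs from the paper's mainly in organization. The paper first fixes the vectors: it chooses a subspace $U\subseteq V$ of dimension $q\le n$ on which $h_1,\dots,h_n$ remain linearly independent, takes $v_1,\dots,v_q$ to be a basis of $U$, and then produces all the functionals at once by a duality argument --- the maps $\varphi_i(f)=\sum_j f_j(h_i(v_j))$ on $(V^*)^q$ are linearly independent, so the induced map $(V^*)^q/W\to K^n$ (with $W$ the common kernel) is an isomorphism, and pulling back the standard basis yields the $f_{jk}$ in one stroke. You instead build the pairs of vector and functional greedily, one rank-one functional $S\mapsto f(S(v))$ per step, each chosen to be nonzero on the remaining common kernel inside $\mathrm{span}(h_1,\dots,h_n)$, and then merge the vector lists by padding with zero functionals. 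Both arguments rest on the same elementary fact, namely that functionals of the form $T\mapsto\sum_k f_k(T(v_k))$ separate points of $\End{K}{V}$ (equivalently, that $V^*$ separates points of $V$), and both produce at most $n$ vectors. The paper's version packages the choice into a single isomorphism and so avoids the induction; yours avoids the preliminary reduction to a finite-dimensional $U$ and makes the separation mechanism more transparent. Neither needs anything beyond selecting finitely many vectors and functionals, and your final bookkeeping step (merging the lists and setting the unused $f_{jk}$ to zero) is exactly what is needed to match the shape of the matrix in the statement.
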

\begin{proof}
Suppose that $h_1, \ldots, h_n$ are linearly independent. Then there exists a finite dimensional subspace $U$ of $V$ of dimension $q\leq n$ such that $h_1, \ldots, h_n$ restricted to $U$ are linearly independent.
Let $\{v_1, \ldots, v_q\}$ be a basis for $U$. For each $1\leq i \leq n$ we define  $\varphi_i \in ((V^*)^q)^*$ as follows:
$$ \varphi_i(f) := \sum_{j=1}^q f_j(h_i(v_j)), \qquad \forall f=(f_1,\ldots, f_q)\in (V^*)^q.$$
Since $h_1, \ldots, h_n$ are linearly independent also  $\varphi_1, \ldots, \varphi_n$ are linearly independent.  
Let $W$ be the intersection of all kernels of $\varphi_i$ and consider the function
$$\Phi: (V^*)^q/W \longrightarrow K^n \qquad \mbox{ with } \qquad f+W \mapsto (\varphi_1(f), \cdots, \varphi_n(f)).$$
As $\Phi$ is injective, $\mathrm{dim}((V^*)^q/W)\leq n$. Moreover, as $\varphi_i$ are linearly independent in $((V^*)^q)^*$ they are also linearly independent in $((V^*)^q/W)^*$. Hence $\mathrm{dim}((V^*)^q/W)=\mathrm{dim}(((V^*)^q/W)^*)=n$. In particular $\Phi$ is an isomorphism and there are elements $f_1,\ldots, f_n \in (V^*)^q$ with 
$f_l=(f_{l1}, \ldots, f_{lq})$ for all $1\leq l \leq n$, such that the matrix
$$ \left( \varphi_i(f_l)\right)_{1\leq i,l\leq n} = \left(  \sum_{j=1}^q f_{lj}(h_i(v_j)) \right)_{1\leq i,l\leq n}$$
has non-zero determinant. The converse is clear.
\end{proof}

\begin{proof}[Proof of Proposition \ref{reduction2p}]
Let $R, H$ and $M$ as in the statement of the Proposition. The $H$-action of $H$ on $M$ is given by $n$ endomorphisms $h_i:M\rightarrow M$ for $i=1,\ldots, n$. Since $M$ is faithful, the elements $h_1, \ldots, h_n$ are independent over $R$ in the sense that if $\sum_{i=1}^n r_ih_i = 0$ for some $r_1, \ldots, r_n\in R$, then $r_1=\cdots = r_n=0$.  Let $F$ be the  field of fractions of $R$ and consider $M'=M\otimes_R F$ with its $F$-linearly independent endomorphisms $h_i' = h_i\otimes id_F: M' \rightarrow M'$. By Lemma \ref{VectorspaceLemma} there exist elements $v_k\in M'$ and linear functions $f_{jk}:M'\rightarrow K$ such that
$$0\neq d=\mathrm{det}\left( \sum_{k=1}^m f_{jk}(h_i'(v_k))\right)_{1\leq i,j \leq n}.$$
Let $C\in R$ be such that $w_k:=Cv_k \in M=M\otimes 1 $ for all $k$.
Let $\{b_\lambda \mid \lambda\in \Lambda\}$ be a basis for $M$ as an $R$-module, then there exists a finite subset $\Lambda' \subseteq \Lambda$ such that all elements $h_i(w_k)$ belong to the submodule spanned freely by $b_\lambda$ for $\lambda\in\Lambda'$. Let $D\in R$ be the common denominator of $f_{jk}(h_i(w_k))$ for all $i,j,k$ and define $R$-linear maps $g_{jk}:M\rightarrow R$ by $g_{jk}(b_\lambda)=Df_{jk}(b_\lambda)$ for $\lambda \in \Lambda'$ and $g_{jk}(b_\lambda)=0$ for $\lambda \in \Lambda\setminus \Lambda'$. Then 
$$a:=\mathrm{det}\left( \sum_{k=1}^m g_{jk}(h_i(w_k))\right)_{1\leq i,j \leq n} = C^n\mathrm{det}\left( \sum_{k=1}^m g_{jk}(h_i'(v_k))\right)_{1\leq i,j \leq n}=C^nD^nd \neq 0.$$
By Lemma \ref{infinitemaximals} for any $q>1$ the set $Y:=X_{a,q}$ satisfies $\bigcap_{\m \in Y} \m = 0$. 

\vspace{0.3cm}

\noindent $(1)$ Let  $\m \in Y$ and $\pi:R\rightarrow R/\m$ be the canonical projection. Consider $M\otimes_R R/\m = M/\m M$ and $H\otimes_R R/\m = H/\m H$, which acts on $M/\m M$ by the induced $R/\m$-linear maps $\overline{h_i}: M/\m M \rightarrow M/\m M$. Set $\overline{m}=m + \m M$ for all $m\in M$ and define $\overline{g_{jk}}:M/\m M \rightarrow R/\m$ by $\overline{g_{jk}}(\overline{m}) :=  \pi g_{jk}(m)$ for all $m\in M$. 
Then $\overline{a}=\mathrm{det}\left( \sum_{k=1}^m \overline{g_{jk}}(\overline{h_i}(\overline{w_k)})\right)_{1\leq i,j \leq n}$  is non-zero as $a\not\in \m$. By Lemma \ref{VectorspaceLemma}, $M/\m M$ is a faithful $H/\m H$-module.

\vspace{0.3cm}

\noindent $(2)$ By definition, for any $\m \in Y$, we have $\mathrm{char}(R/\m)>q$.

\vspace{0.3cm}

\noindent $(3)$ Since $\bigcap_{\m \in Y} \m = 0$,  $R$ is a subdirect product of the factor rings $R/\m$ for $\m \in Y$,  the canonical homomorphism $\pi_Y: R \hookrightarrow \prod_{\m\in Y} R/\m$ is injective. Tensoring with the free finite rank $R$-module $H$ yields an injective (ring) homomorphism
$$ 1\otimes \pi_Y: H_R \longhookrightarrow  \prod_{\m \in Y} H_R \otimes_R R/\m = \prod_{\m \in Y} H/\m H.$$
\end{proof}

\section{Reduction to Hopf algebras over finite fields}
Let $R$ be an integral domain in characteristic $0$ with zero Jacobson radical, such that $\mathrm{char}(R/\m)>0$ for all
 $\m \in \MaxSpec{R}$. Let $H$ be a separable and coseparable Hopf algebra over $R$ and free of finite rank as an $R$-module. For any maximal ideal $\m$ of $R$ the set $\m H$ is an ideal of $H$. Moreover, $H/\m H$ has an induced Hopf algebra structure over the finite field $F_\m=R/\m$. Denote by $\overline{x}$ the elements in $R/\m$ respectively $H/\m H$. The integral $t \in H$ with $\epsilon(t)=1$ yields an integral $\overline{t} \in H/\m H$ with
 $\epsilon(\overline{t}) = \overline{\epsilon(1)} = \overline{1}$. Hence $H/\m H$ is semisimple over $F_\m$. Analogously $(H/\m H)^*$ is semisimple. Given a left $H$-module algebra $A$ that is free as an $R$-module, we extend the $H$-action on $A$ to $A/\m A$ by   $\overline{h}\cdot \overline{a} = \overline{h\cdot  a}$, for all $a\in A, h\in H$. Hence $A/\m A$ is a left $H/\m H$-module algebra over a finite field $F_\m$. This reduction from a semisimple Hopf algebra action on a finitely presented algebra over a field of characteristic zero to an action of a semisimple and cosemisimple Hopf algebra over a finite field is the key step in \cite{CuadraEtingofWalton}. In some cases the algebras $A/\m A$ over $F_\m$ become finitely generated over their centre. Extending the Hopf action to the skew-field of fractions of $A/\m A$, when the degree of the skew-field of fractions and the $\mathrm{dim}(H)!$ are coprime, Cuadra, Etingof and Walton showed that $H/\m H$ has to be cocommutative, hence a group algebra. By Proposition \ref{reduction2p}(3) one concludes that $H$ has to be cocommutative, and hence a group algebra. Their key result of \cite{CuadraEtingofWalton} extending \cite{EtingofWalton} is the following:
 \begin{prop}[Cuadra-Etingof-Walton, {\cite[Proposition 3.3]{CuadraEtingofWalton}}]\label{prop33}
Let $F$ be an algebraically closed field, $H$ a semisimple, cosemisimple Hopf algebra over $F$ acting inner faithfully on a division algebra $D$ which is a finite module over its centre $Z$. If $[D:Z]$ and  $\mathrm{dim}(H)!$ are coprime,  then $H$ is a group algebra.
\end{prop}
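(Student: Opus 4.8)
The plan is to reduce the statement to a question about the action of $H$ on a purely transcendental-type central extension, and then exploit a Galois-descent / Noether–Skolem type argument to force cocommutativity. First I would pass to a maximal subfield $L$ of $D$ containing $Z$, so that $[L:Z]^2 = [D:Z]$ and $D \otimes_Z L \cong \mathrm{Mat}_r(L)$ with $r = [L:Z]$; in particular $r$ divides $[D:Z]$, hence $\gcd(r, \dim(H)!)=1$. The crucial point is that the centre $Z$ is an $H$-stable subalgebra: since $H$ is semisimple, by \cite[Lemma]{CuadraEtingofWalton} (or directly, because the $H$-action on $D$ restricts to the centre — $h\cdot z$ is again central for central $z$, as one checks using cocommutativity of the relevant Sweedler components on a division algebra, or simply because $Z$ is characterised intrinsically) $H$ acts on $Z$, and since $Z$ is a field this action factors through a group algebra $FG$ by the classical fact that a semisimple Hopf algebra acting inner faithfully on a field is a group algebra (Etingof–Walton, char $0$; here $F$ algebraically closed).

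Next I would analyse how $H$ sits over its image acting on $Z$. Let $H' = H/\mathrm{Ann}_H(Z)$; this is a group algebra $FG$, and $D$ becomes a module algebra over the "relative" Hopf algebra sitting over $FG$. The strategy is to descend to $\overline{Z}$, an algebraic closure of $Z$: the action of $H$ on $D$ extends to $D \otimes_Z \overline{Z}$, which after base change becomes $\mathrm{Mat}_r(\overline{Z})$. Because $r$ and $\dim(H)$ are coprime, the induced projective representation of $H$ on $\overline{Z}^r$ can be lifted to an honest representation — this is where coprimality is essential, via the fact that $H^{2}$-obstructions (the Schur-multiplier-type class) are $\dim(H)$-torsion while conjugation by $\mathrm{GL}_r$ only sees $r$-torsion phenomena. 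Once the action on the matrix algebra is inner, Skolem–Noether gives a group-like-indexed family of units implementing it, and one deduces that $H$, modulo what it does on the centre, acts by an inner (hence cocommutative-compatible) action; combined with the fact that the centre-action is already a group algebra, this forces $H$ itself to be cocommutative, and a cocommutative semisimple Hopf algebra over an algebraically closed field of characteristic $0$ is a group algebra.

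The main obstacle I expect is the liftability step: controlling the cohomological obstruction to turning the projective action of $H$ on $\overline{Z}^{\,r}$ into a linear one, and doing so \emph{Hopf-theoretically} rather than just for a group — one wants a genuine $H$-module structure (or a module structure over a central extension of $H$ that collapses because of coprimality), not merely a projective one. A related subtlety is keeping track of the inner-faithfulness when passing from $D$ to its base change $D \otimes_Z \overline{Z}$ and back: one must ensure no Hopf ideal is lost in the extension, which should follow because $D \to D\otimes_Z \overline{Z}$ is faithfully flat and $H$ is finite-dimensional, so $\mathrm{Ann}_H(D) = \mathrm{Ann}_H(D\otimes_Z\overline{Z})$. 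I would also need the elementary observation that $[D:Z]$ is a perfect square and that its square root $r = \sqrt{[D:Z]}$ is the quantity that actually enters the Skolem–Noether argument, with $\gcd(r,\dim H!) = 1$ inherited from the hypothesis; this is routine but worth stating carefully so the coprimality hypothesis is used in exactly the right place.
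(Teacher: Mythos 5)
First, a point of order: the paper does not prove Proposition \ref{prop33} at all --- it is imported verbatim from \cite[Proposition 3.3]{CuadraEtingofWalton} and used as a black box, so there is no in-paper argument to measure your sketch against; it has to stand on its own. It does not, for two reasons. The first is your claim that the centre $Z$ is an $H$-stable subalgebra of $D$. For a Hopf action this is not automatic: from the module-algebra axiom one gets, for $z\in Z$ and $a\in D$,
$(h\cdot z)a=\sum h_{(1)}\cdot\bigl(z\,(S(h_{(2)})\cdot a)\bigr)=\sum\bigl((h_{(1)}S(h_{(3)}))\cdot a\bigr)(h_{(2)}\cdot z)$,
and this equals $a(h\cdot z)$ for all $a$ only when $\sum h_{(1)}S(h_{(3)})\otimes h_{(2)}=1\otimes h$, i.e.\ essentially when $H$ is cocommutative --- which is the conclusion you are trying to prove, so the justification ``using cocommutativity of the relevant Sweedler components'' is circular; likewise, ``intrinsically characterised'' subalgebras are preserved by automorphisms and derivations, not by general Hopf actions. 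Establishing stability of $Z$ (or working instead with the invariant division subalgebra $D^H$ and Skryabin-type freeness results) is a substantial part of the actual difficulty, not a free input.

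The second, fatal, gap is the endgame ``once the action on the matrix algebra is inner \dots\ $H$ acts by an inner (hence cocommutative-compatible) action \dots\ this forces $H$ to be cocommutative.'' Inner actions do not force cocommutativity: for any finite-dimensional Hopf algebra $H$ and any $H$-module $V$, the adjoint action $h\cdot f=\sum h_{(1)}\,f\,S(h_{(2)})$ makes $\mathrm{End}_F(V)$ an $H$-module algebra with an inner (and, for suitable faithful $V$, inner faithful) action, with $H$ completely arbitrary. The only reason such examples do not contradict the proposition is that $[\mathrm{End}_F(V):F]=(\dim V)^2$ is never coprime to $\dim(H)!$ unless $\dim V=1$; so the coprimality hypothesis must be spent precisely on excluding nontrivial inner actions (forcing any implementing units into the centre), whereas your sketch spends it on an unsubstantiated cohomological lifting claim (``$H^2$-obstructions are $\dim(H)$-torsion while $\mathrm{GL}_r$ only sees $r$-torsion'' is not an established fact for Hopf algebras) and would, even if successful, still leave you with an inner but not necessarily cocommutative action. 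A smaller but relevant further issue: you invoke Etingof--Walton for actions on fields ``in char $0$,'' but Proposition \ref{prop33} is applied here over algebraically closed fields of \emph{positive} characteristic --- that is the whole point of the reduction in Sections \ref{ReductionSection1} and \ref{ReductionSection2} --- where cosemisimplicity is genuinely needed and the field case is itself part of what \cite{CuadraEtingofWalton} must establish.
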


As a consequence from this Proposition and the reduction process to fields of positive characteristic, as described in sections \ref{ReductionSection1} and \ref{ReductionSection2}, one deduces:

\begin{thm}[Cuadra-Etingof-Walton]\label{MainResultCEW}
Let $H$ be a semisimple Hopf algebra over a field $K$ of characteristic $0$ acting on a finitely presented algebra $A$ that is a Noetherian domain. Let $R$ be the ring of structure constants of $H$ and $A$ as defined by (\ref{structureconstants}) and let $H_R$ and $A_R$ be the corresponding $R$-algebras.
Suppose that there exists $q\geq 1$ such that for all maximal ideals $\m$ of $R$ with $\mathrm{char}(R/\m)>q$ one has:
\begin{itemize}
	\item the induced algebra $A_\m = A_R \otimes_R R/\m$ is a Noetherian domain; 
	\item the skew-field of fractions $D_\m$ of $A_\m$ is finite over its centre $Z_\m$;
	\item $[{D_\m}:{Z_\m}]$ is coprime with  $\mathrm{dim}(H)!$.
\end{itemize}
Then the action of $H$ on $A$ factors through a group algebra.
\end{thm}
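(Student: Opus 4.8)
The plan is to assemble the pieces already developed in the paper into a single reduction argument. First, since $H$ acts on $A$, it factors through an inner faithful action of $H/I$ for the largest Hopf ideal $I$ with $I\cdot A=0$; replacing $H$ by $H/I$ we may assume the action of $H$ on $A$ is inner faithful, and it suffices to show that this $H$ is a group algebra. By Lemma \ref{innerfaithfully}, inner faithfulness is equivalent to $M := A^{\otimes N}$ being a faithful left $H$-module for some $N>0$. I would then work with $M_R := A_R^{\otimes N}$, which is a faithful $H_R$-module and is free as an $R$-module (it is a quotient of a free algebra over the domain $R$, hence at worst we pass to a free basis; more carefully, one uses that $A_R$ embeds in $A=A_R\otimes_R K$ so $M_R$ is $R$-torsion-free, and over the domain $R$ one extracts a free $R$-submodule large enough to detect the relevant determinant exactly as in the proof of Proposition \ref{reduction2p}).

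Next, apply Proposition \ref{reduction2p} to $R$, $H_R$, $M_R$, and to the given $q\geq 1$: this produces a set $Y$ of maximal ideals of $R$ such that $\operatorname{char}(R/\m)>q$ for every $\m\in Y$, the module $M_R/\m M_R$ is a faithful $H/\m H$-module for every $\m\in Y$, and the canonical map $\Psi: H_R \hookrightarrow \prod_{\m\in Y} H/\m H$ is injective. For each such $\m$, the discussion in Section 4 shows that $F_\m=R/\m$ is a finite field, $H_\m := H/\m H$ is a semisimple and cosemisimple Hopf algebra over $F_\m$, and $A_\m = A_R\otimes_R R/\m$ is a left $H_\m$-module algebra. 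Since $M_R/\m M_R = (A_\m)^{\otimes N}$ is a faithful $H_\m$-module, Lemma \ref{innerfaithfully} (applied over $F_\m$) gives that $H_\m$ acts inner faithfully on $A_\m$. By hypothesis, for $\m\in Y$ the algebra $A_\m$ is a Noetherian domain whose skew-field of fractions $D_\m$ is finite over its centre $Z_\m$ with $[D_\m:Z_\m]$ coprime to $\dim(H)!$; note $\dim_{F_\m} H_\m = \operatorname{rank}_R H_R = \dim_K H$, so the coprimality is with $\dim(H_\m)!$ as required.

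Now I would extend the $H_\m$-action from $A_\m$ to $D_\m$: since $H_\m$ is semisimple, the action on the Noetherian domain $A_\m$ extends to its (skew-)field of fractions — this is the standard localization argument for Hopf actions, using that elements acted on by an integral behave well, and it is precisely the step invoked in Section 4. Base-changing to the algebraic closure $\overline{F_\m}$ keeps $H_\m$ semisimple and cosemisimple and keeps the action inner faithful on $D_\m\otimes_{Z_\m}\overline{Z_\m}$ (or one applies Proposition \ref{prop33} over $\overline{F_\m}$ after a suitable scalar extension; the index $[D_\m:Z_\m]$ is unchanged). Proposition \ref{prop33} then forces $H_\m$, hence $H/\m H$, to be a group algebra, in particular cocommutative. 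Since $\Psi:H_R\hookrightarrow\prod_{\m\in Y}H/\m H$ is an injective algebra map into a product of cocommutative coalgebras and $\Psi$ is also a coalgebra map (the comultiplication reduces mod $\m$ compatibly), $H_R$ is cocommutative, and therefore $H\simeq H_R\otimes_R K$ is a cocommutative semisimple Hopf algebra over a field of characteristic $0$, hence a group algebra by the Cartier–Kostant–Milnor–Moore structure theorem. This shows the original action factors through a group algebra.

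The main obstacle I anticipate is the scalar-extension bookkeeping around Proposition \ref{prop33}: one must check that passing to $\overline{F_\m}$ (which is needed to apply that proposition) preserves both inner faithfulness of the action on the division algebra and the coprimality of the degree $[D_\m:Z_\m]$ with $\dim(H)!$, and that the extension of the action from $A_\m$ to its skew-field of fractions is legitimate and remains inner faithful. The freeness of $M_R$ over $R$ and the compatibility of $\Psi$ with comultiplication are routine but must be stated carefully, since Proposition \ref{reduction2p} as phrased only asserts $\Psi$ is an injective homomorphism of $R$-algebras.
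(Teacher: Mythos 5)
Your proposal is correct and follows essentially the same route as the paper's own proof: reduce to an inner faithful action, use Lemma \ref{innerfaithfully} to pass to a faithful module $A^{\otimes N}$, apply Proposition \ref{reduction2p} to obtain the set $Y$, extend each $H_\m$-action to $D_\m$ (the paper cites Skryabin--Van Oystaeyen for this), invoke Proposition \ref{prop33} over $\overline{R/\m}$ to get cocommutativity of each $H_\m$, and conclude via the injectivity of $\Psi$. The technical points you flag (freeness of $A_R^{\otimes N}$ over $R$ and compatibility of $\Psi$ with the comultiplication) are indeed left implicit in the paper as well, so your added caution is appropriate but does not change the argument.
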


\begin{proof} Suppose that $H$ acts inner faithfully on $A$. By Lemma \ref{innerfaithfully}, $H$ acts faithfully on $A^{\otimes_K n}$ for some $n$. Passing from $K$ to $R$, we also have that $H_R$ acts faithfully on $A_R^{\otimes_R n}$. By Proposition \ref{reduction2p} there exists a set $Y$ of maximal ideals of $R$ with $\mathrm{char}(R/\m)>q$ and 
$A_R^{\otimes_R n} \otimes_R R/\m = \left(A_R \otimes_R R/\m\right)^{\otimes_{R/\m} n}$ being a faithful 
$H_\m := H_R \otimes_R R/\m$-module for all $\m \in Y$. Again by Lemma \ref{innerfaithfully} $H_\m$ acts inner faithfully on $A_\m := A_R\otimes_R R/\m$. By assumption, the skew-field of fractions $D_\m$ of $A_\m$ is finite over its centre and its  dimension is coprime with  $\mathrm{dim}(H)!=\mathrm{dim}(H_\m)!$.  By \cite[Theorem 2.2]{SkryabinOystaeyen} the action of $H_\m$ on $A_\m$ extends to an action on $D_\m$, which must be also inner faithful. The same is true if we pass to the algebraic closure $\overline{R/\m}$ of $R/\m$ and tensor up $H_\m, A_\m$ and $D_\m$. By Proposition \ref{prop33}  $H_\m\otimes_{R/\m} \overline{R/\m}$ is cocommutative and hence $H_\m$ is cocommutative. By Proposition \ref{reduction2p}(3), the canonical $R$-algebra homomorphism $H_R \longhookrightarrow  \prod_{\m \in Y} H_\m = H_R \otimes_R \prod_{\m \in Y} R/\m$ is injective. Since all $H_\m$ are cocommutative, also $H_R$ is cocommutative, and therefore H is as well.  
\end{proof}

\section{Enveloping algebras of finite dimensional Lie algebras}

The main purpose in \cite{CuadraEtingofWalton}  was to show that any semisimple Hopf  action on a Weyl algebra factors through a group algebra. However, Cuadra, Etingof and Walton's method can also be used to show that actions of semisimple Hopf algebras on enveloping algebras of finite dimensional Lie algebras or on iterated differential operator rings over a field $K$ of  characteristic $0$, factor through group algebras.
For modular Lie algebras one has the following result from Farnsteiner and Strade's book {\cite[Chapter 6, Theorem 6.3(1)]{StradeFarnsteiner}}:
\newcommand{\g}{\mathfrak{g}}

\begin{thm}[Farnsteiner-Strade]\label{FarnsteinerStrade}
Let $U(\g)$ be the enveloping algebra of a  finite dimensional Lie algebra $\g$ over a field of characteristic $p$. Then the dimension of  $\mathrm{Frac}(U(\g))$ over its centre is a power of $p$, 
\end{thm}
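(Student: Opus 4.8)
The plan is to reduce the statement about $\mathrm{Frac}(U(\g))$ to a statement about a large commutative subalgebra and its field of fractions, exploiting the restricted structure available in characteristic $p$. First I would recall that in characteristic $p$ the enveloping algebra $U(\g)$ of a finite dimensional Lie algebra $\g$ has a large central subalgebra: for each $x\in\g$ the element $x^p - x^{[p]}$ (or simply $x^p$ if $\g$ is not restricted, after embedding $\g$ into a restricted Lie algebra, e.g.\ $\mathrm{Der}(U(\g))$ or by passing to the restricted enveloping algebra of the $p$-envelope) is central. Concretely, if $x_1,\ldots,x_m$ is a $K$-basis of $\g$, then $z_i := x_i^p - x_i^{[p]}$ lie in the centre $Z(U(\g))$, and $U(\g)$ is a free module of rank $p^m$ over the polynomial subalgebra $K[z_1,\ldots,z_m]$ (this is the standard PBW-type argument: the monomials $x_1^{a_1}\cdots x_m^{a_m}$ with $0\le a_i<p$ form a free basis). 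Hence $U(\g)$ is a finitely generated module over its centre and is a PI domain.

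Next I would localise: let $Z$ be the full centre of $U(\g)$ and let $C = \mathrm{Frac}(K[z_1,\ldots,z_m])\subseteq\mathrm{Frac}(Z)$. Since $U(\g)$ is a domain that is module-finite over the central polynomial ring $K[z_1,\ldots,z_m]$, the ring $Q := U(\g)\otimes_{K[z_1,\ldots,z_m]} C$ is a finite-dimensional algebra over the field $C$, in fact a central simple algebra over $\mathrm{Frac}(Z)$ once we localise at all of $Z$, and $\mathrm{Frac}(U(\g)) = Q\otimes_{\mathrm{Frac}(Z_0)}\mathrm{Frac}(Z)$ where $Z_0 = K[z_1,\ldots,z_m]$; the point is that $\dim_{\mathrm{Frac}(Z)}\mathrm{Frac}(U(\g))$ equals $\dim_C Q$ divided by $[\mathrm{Frac}(Z):C]$, and since $\dim_C Q \le p^m$ while the division algebra $\mathrm{Frac}(U(\g))$ has dimension a perfect square over its centre $\mathrm{Frac}(Z)$, the PI-degree must be a power of $p$. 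The cleanest way to phrase this: the PI-degree of $U(\g)$ is the square root of $\dim_{\mathrm{Frac}(Z)}\mathrm{Frac}(U(\g))$, and it divides the PI-degree of the generic matrix bound coming from the rank $p^m$ representation, so it is a power of $p$.

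Concretely, I would argue as follows. By a theorem on prime PI rings (Posner's theorem), $\mathrm{Frac}(U(\g))$ is a division algebra $D$ finite-dimensional over its centre $k := \mathrm{Frac}(Z(U(\g)))$, and $[D:k] = n^2$ where $n$ is the PI-degree. On the other hand, over the smaller field $C = \mathrm{Frac}(Z_0)$ we have $D\otimes_k(\text{something})$... more efficiently: $U(\g)\otimes_{Z_0} C$ is an Azumaya-like algebra of rank $p^m$ over $C$ whenever $C$ is chosen so that the module becomes projective, so its PI-degree $n'$ satisfies $(n')^2 \mid p^m$, forcing $n'$ to be a power of $p$; and since $\mathrm{Frac}(U(\g))$ is obtained from $U(\g)\otimes_{Z_0}C$ by a further central localisation, the PI-degree is unchanged, $n = n'$. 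The main obstacle I anticipate is the bookkeeping when $\g$ is not restricted: then $x^{[p]}$ need not make sense inside $\g$, so one must first pass to the restricted $p$-envelope $\g_p$ of $\g$ inside $\mathrm{Der}_K(U(\g))$ (or inside $U(\g)$ itself), note that $U(\g)\subseteq U(\g_p)$ with $U(\g_p)$ module-finite over $U(\g)$, and transfer the conclusion back — since $U(\g)$ and $U(\g_p)$ have the same field of fractions up to a finite central extension, their PI-degrees differ by a factor that is itself forced to be a power of $p$ by the rank-$p^{\dim\g_p}$ bound. Carefully handling this $p$-envelope step, and making sure the field of fractions computations track the PI-degree exactly rather than just up to divisibility, is where the real work lies; the rest is the standard PBW-and-central-localisation machinery. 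I will not reproduce the full details of Farnsteiner–Strade here, referring instead to {\cite[Chapter 6, Theorem 6.3]{StradeFarnsteiner}}.
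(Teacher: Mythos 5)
The paper itself offers no proof of Theorem \ref{FarnsteinerStrade}: it is quoted verbatim from Strade--Farnsteiner and used as a black box, so there is nothing internal to compare your argument against. That said, your sketch is essentially the standard proof of this result, and it is also exactly the strategy the authors themselves deploy for iterated Ore extensions in Proposition \ref{polynomialsubrings} and the remark following it: produce central $p$-polynomial elements, deduce that the algebra is free of $p$-power rank over a central polynomial subring $Z_0$, observe that $\mathrm{Frac}(U(\mathfrak{g}))$ is obtained by inverting $Z_0\setminus\{0\}$, and conclude by multiplicativity of degrees in the tower $\mathrm{Frac}(Z_0)\subseteq Z\subseteq D$ that $[D:Z]$ divides a power of $p$. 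Your outline is correct, with two caveats. First, the claim that $U(\mathfrak{g})$ has rank $p^{\dim\mathfrak{g}}$ over $K[z_1,\dots,z_m]$ with basis the monomials $x_1^{a_1}\cdots x_m^{a_m}$, $0\le a_i<p$, is only valid when $\mathfrak{g}$ is restricted and $z_i=x_i^p-x_i^{[p]}$. For a general $\mathfrak{g}$ you do not need the $p$-envelope detour at all: since $\mathrm{ad}(x^{p^j})=(\mathrm{ad}\,x)^{p^j}$ on $U(\mathfrak{g})$ and $\mathrm{ad}(x_i)$ is an endomorphism of the finite-dimensional space $\mathfrak{g}$, Jacobson's lemma \cite{Jacobson} gives a monic $p$-polynomial $g_i$ with $g_i(\mathrm{ad}\,x_i)=0$, whence $z_i:=g_i(x_i)$ is central; PBW then gives freeness of rank $p^{\sum k_i}$ where $\deg g_i=p^{k_i}$. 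This removes the unresolved ``transfer back from $U(\mathfrak{g}_p)$'' step that you correctly flag as the weak point of your write-up. Second, the excursion through PI-degrees, Posner's theorem and Azumaya-type ranks is unnecessary bookkeeping: once $[D:\mathrm{Frac}(Z_0)]=p^N$ is established, $[D:Z]\,[Z:\mathrm{Frac}(Z_0)]=p^N$ already forces $[D:Z]$ to be a power of $p$. With those repairs your argument is a complete and correct proof, and it buys something the bare citation does not, namely that it runs on exactly the same machinery (Cayley--Hamilton plus \cite{Jacobson}) as Proposition \ref{polynomialsubrings}, so the Lie algebra case and the Ore extension case could be treated uniformly.
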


This Theorem readily asserts the following:

\begin{cor}\label{Enveloping} Any action of a semisimple  Hopf algebra over a field $F$ of characteristic zero  
on the enveloping algebra  of a  finite dimensional Lie algebra factors through a group algebra.
\end{cor}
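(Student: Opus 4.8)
The plan is to verify that the hypotheses of Theorem \ref{MainResultCEW} are satisfied when $A=U(\g)$ is the enveloping algebra of a finite dimensional Lie algebra $\g$ over $K$, so that the corollary follows immediately. First I would fix a $K$-basis $e_1,\ldots,e_n$ of $\g$ with structure constants $[e_i,e_j]=\sum_k c_{ij}^k e_k$, present $U(\g)$ as $K\langle x_1,\ldots,x_n\rangle/P$ where $P$ is generated by the relations $x_ix_j-x_jx_i-\sum_k c_{ij}^k x_k$; this is a finite presentation, and by the Poincaré–Birkhoff–Witt theorem $U(\g)$ is a Noetherian domain. Now let $R$ be the ring of structure constants of the action of $H$ on $A$ as in $(\ref{structureconstants})$; note the $c_{ij}^k$ are among the coefficients of the $p_\ell$, so they lie in $R$. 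Then $A_R = R\langle x_1,\ldots,x_n\rangle/P'$ is exactly the enveloping algebra $U_R(\g_R)$ of the $R$-Lie algebra $\g_R=\bigoplus_i R e_i$, and $A_\m = A_R\otimes_R R/\m \cong U_{F_\m}(\g_\m)$ is the enveloping algebra of the finite-dimensional Lie algebra $\g_\m = \g_R\otimes_R F_\m$ over the finite field $F_\m=R/\m$ (the PBW basis is preserved under base change since it is free over $R$).

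With this identification in hand, the three bullet-point conditions of Theorem \ref{MainResultCEW} are checked as follows. By PBW over the field $F_\m$, $A_\m=U_{F_\m}(\g_\m)$ is a Noetherian domain, so the first bullet holds for \emph{every} maximal ideal $\m$; in particular one may take $q=1$. For the second and third bullets, one invokes Theorem \ref{FarnsteinerStrade} of Farnsteiner–Strade: over a field of characteristic $p$, the enveloping algebra of a finite-dimensional Lie algebra has a skew-field of fractions $D_\m=\mathrm{Frac}(U_{F_\m}(\g_\m))$ that is finite over its centre $Z_\m$, with $[D_\m:Z_\m]$ a power of $p$. Since $\mathrm{char}(F_\m)=p$ and, by the setup in Section \ref{ReductionSection2}, $p$ can be taken larger than $\dim_K(H)$ by restricting to $\m$ with $\mathrm{char}(R/\m)>q$ for a suitable $q\ge \dim_K H$, the prime $p$ does not divide $\dim(H)!$, hence neither does any power of $p$; so $[D_\m:Z_\m]$ is coprime with $\dim(H)!$. (One should remark that $D_\m$ exists: $U_{F_\m}(\g_\m)$ is an Ore domain because it is Noetherian, or because it is an iterated Ore extension, so Goldie's theorem applies.) All hypotheses of Theorem \ref{MainResultCEW} are met, and we conclude that the action of $H$ on $U(\g)$ factors through a group algebra.

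I do not expect a serious obstacle here: the corollary is essentially a translation exercise, matching the general machinery of Theorem \ref{MainResultCEW} against the concrete input provided by Theorem \ref{FarnsteinerStrade}. The one point that deserves a sentence of care is the compatibility of base change with the presentation — i.e.\ that $A_R\otimes_R R/\m$ really is the enveloping algebra over $F_\m$ and not merely a quotient thereof — which follows because the PBW monomials form an $R$-basis of $A_R$ (freeness over $R$), so reduction modulo $\m$ neither collapses nor creates relations. A second minor point is ensuring $q$ is chosen so that $\mathrm{char}(R/\m)>q$ forces $p\nmid\dim(H)!$; taking $q=\dim(H)$ (or any $q\ge$ the largest prime $\le\dim(H)$) suffices. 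Everything else is immediate from PBW and the cited theorem.
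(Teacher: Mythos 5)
Your proposal is correct and follows essentially the same route as the paper: reduce modulo a maximal ideal $\m$ of the ring of structure constants, identify $A_\m$ with $U(\g_\m)$ over the finite field $R/\m$, and invoke the Farnsteiner--Strade theorem to verify the hypotheses of Theorem \ref{MainResultCEW}. The only cosmetic difference is your choice of $q$ (any $q\geq\dim(H)$ works, versus the paper's $q=\dim(H)!$), and you spell out the base-change compatibility of the PBW basis, which the paper leaves implicit.
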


\begin{proof}
Let $H$ be a semisimple Hopf algebra over a field $K$ of characteristic $0$.
Let $\mathfrak{g}$ be a finite dimensional Lie algebra over $K$ and $A:=U(\mathfrak{g})$ its enveloping algebra.
Suppose that $H$ acts on $A$ and denote by $R$ the ring of structure constants of $H$ and $R$ as in (\ref{structureconstants}). Using the structure constants of $\g$, respectively $H$, define the Lie algebra $\mathfrak{g}_R$ over $R$ of finite rank and the Hopf algebra $H_R$ over $R$. Moreover, $A_R$ is $U(\mathfrak{g}_R)$, the enveloping algebra of $\mathfrak{g}_R$ over the integral domain $R$.
Let $\m$ be any maximal ideal of $R$, set $F=R/\m$ and $p=\mathrm{char}(F)$. Then
$A_\m:={A_R}   {\otimes_R}  {F} = {U({\mathfrak{g}_R})} {\otimes_R} {F} \simeq U( {{\mathfrak{g}_R} {\otimes_R} {F}})$
is the enveloping algebra of the finite dimensional Lie algebra $\mathfrak{g}_\m:={{\mathfrak{g}_R} {\otimes_R} {F}}$ over the finite field $F$. By Theorem \ref{FarnsteinerStrade}  one has $[D_\m:Z_\m]=p^m$, where $D_\m=\mathrm{Frac}(A_\m)$ and $Z_\m=Z(D_\m)$ for some $m\geq 0$. Hence for $q=\mathrm{dim}(H)!$ the assumptions of Theorem \ref{MainResultCEW} are fulfilled and imply that the Hopf algebra action of $H$ on $A$ factors through a group algebra.
\end{proof}

\section{Iterated Differential Operator Rings}

In this last section we show that Cuadra-Etingof-Walton's method can be applied to iterated Ore extensions of derivation type over polynomial rings. The following Proposition is the crucial step to show that the centre of such iterated extensions over a field of characteristic $p$ is large.

\begin{prop}\label{polynomialsubrings}
Let  $A$ be a Noetherian domain over a  field $F$ of characteristic $p$. Suppose that $A$ contains central elements $t_1, \ldots, t_n$, with $n\geq 1$, such that 
\begin{enumerate}
\item $B=F[t_1,\ldots, t_n]$ is a polynomial ring, and 
\item $A$ is a free $B$-module of rank $p^m$, for some $m\geq 0$.
\end{enumerate}
Let $d$ be any $F$-derivation of $A$. Then $A[x;d]$ contains $n+1$ central elements $\tilde{t}_1,\ldots, \tilde{t}_{n+1}$ such that $\widetilde{B}:=F[\tilde{t}_1,\ldots, \tilde{t}_{n+1}]$ is a polynomial ring and $A[x;d]$ is free over $\widetilde{B}$ of rank $p^{m+k}$ for some $k\geq 0$ depending on $d$.
\end{prop}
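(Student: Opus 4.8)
The plan is to leave the polynomial generators essentially untouched --- only replacing them by their $p$-th powers --- and to manufacture the extra central element $\tilde t_{n+1}$ as a monic additive polynomial in $x$. First I would reduce to the case $d(B)=0$. Since $\mathrm{char}(F)=p$ we have $d(t_i^p)=p\,t_i^{p-1}d(t_i)=0$, so $B_0:=F[t_1^p,\dots,t_n^p]$ is annihilated by $d$, and (as $A$ is free of rank $p^m$ over $B$ and $B$ is free of rank $p^n$ over $B_0$) $A$ is free over $B_0$ of rank $p^{m+n}$. Moreover $t_1^p,\dots,t_n^p$ are still central in $A[x;d]$: they are central in $A$ and commute with $x$ because their $d$-image vanishes. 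Hence, after renaming $B_0$ as $B$, it suffices to produce one central element $z\in A[x;d]$, monic in $x$ of degree a power of $p$, such that $\widetilde B:=B[z]$ is a polynomial ring and $A[x;d]$ is free over $\widetilde B$ of rank a power of $p$; one then sets $\tilde t_i=t_i^p$ for $i\le n$ and $\tilde t_{n+1}=z$. From now on $B=F[t_1^p,\dots,t_n^p]$, $A$ is free over $B$ of rank $p^{m+n}$, and $d$ kills $B$ and is therefore a $B$-linear derivation of $A$.

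The heart of the matter is a relation among the iterated $p$-th powers of $d$. In characteristic $p$ each $d^{p^k}$ is again a derivation of $A$ (because $\binom{p}{i}\equiv 0$ for $0<i<p$), it still kills $B$, and it is $B$-linear; thus $d,d^p,d^{p^2},\dots$ all lie in the $B$-module $\mathrm{Der}_B(A)\subseteq\mathrm{End}_B(A)$. Since $A$ is $B$-free of finite rank, $\mathrm{End}_B(A)$ is a finite free $B$-module, so $\mathrm{Der}_B(A)$ is a Noetherian $B$-module. Consequently the ascending chain $M_r:=\sum_{k=0}^{r}B\,d^{p^k}$ of $B$-submodules stabilises, and for some $r\ge 1$ we obtain a \emph{monic} relation
\[ d^{p^r}=\sum_{k=0}^{r-1}c_k\,d^{p^k}\qquad\text{as operators on }A,\qquad c_k\in B. \]

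I would then set $z:=x^{p^r}-\sum_{k=0}^{r-1}c_k\,x^{p^k}\in A[x;d]$. Writing $\mathrm{ad}(x)=L_x-R_x$ as the difference of the commuting left and right multiplications by $x$, one has the Frobenius identity $\mathrm{ad}(x^{p^k})=\mathrm{ad}(x)^{p^k}$, and since $\mathrm{ad}(x)$ restricts to $d$ on $A$ this gives $[x^{p^k},a]=d^{p^k}(a)$ for all $a\in A$. Using $c_k\in Z(A)$ one then computes $[z,a]=\bigl(d^{p^r}-\sum_k c_k d^{p^k}\bigr)(a)=0$ for every $a\in A$, while $[z,x]=0$ because $d(c_k)=0$; so $z$ is central, monic of degree $p^r$ in $x$. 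Being monic of positive $x$-degree, $z$ is transcendental over $A$, hence $\widetilde B=B[z]$ is a polynomial ring in $n+1$ variables. Finally, the identity $z\,x^j=x^{p^r+j}-\sum_k c_k x^{p^k+j}$ drives the usual division algorithm (legitimate since $z$ is central), yielding $A[x;d]=\bigoplus_{i=0}^{p^r-1}A[z]\,x^i$, a free left $A[z]$-module of rank $p^r$; as $A[z]=A\otimes_F F[z]$ is free over $B[z]=B\otimes_F F[z]$ of rank $p^{m+n}$, the algebra $A[x;d]$ is free over $\widetilde B$ of rank $p^{(m+n)+r}=p^{m+k}$ with $k:=n+r$.

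The one genuinely non-routine point is obtaining the relation above \emph{monic and with coefficients in $B$}. Working over $\mathrm{Frac}(B)$ one would only recover the $p$-minimal polynomial of $d$ inside the finite-dimensional restricted Lie algebra $\mathrm{Der}_{\mathrm{Frac}(B)}\bigl(A\otimes_B\mathrm{Frac}(B)\bigr)$; clearing denominators turns its leading coefficient into a non-unit of $B$, which would leave $A[x;d]$ infinitely generated over $F[t_1^p,\dots,t_n^p,z]$ and wreck the finite-rank conclusion. It is precisely the Noetherianity of $\mathrm{Der}_B(A)$ as a $B$-module --- which uses that $d$ is an honest derivation of $A$ and that $A$ is finite free over $B$ --- that forces $d^{p^r}$ into the $B$-span of the lower iterates. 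Everything else is routine degree bookkeeping in the Ore extension.
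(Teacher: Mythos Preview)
Your proof is correct and follows the same overall architecture as the paper's: pass from $B$ to $B_0=F[t_1^p,\dots,t_n^p]$ so that $d$ becomes $B_0$-linear, produce a central element $\Theta$ (your $z$) of the form $x^{p^r}+\sum c_k x^{p^k}$ with $c_k\in B_0$, and then read off the rank via the tower $B_0[\Theta]\subset A[\Theta]\subset A[x;d]$.

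The one point of genuine difference is how the monic $p$-polynomial relation for $d$ is obtained. The paper first invokes Cayley--Hamilton for $d\in\mathrm{End}_{B'}(A)$ to get an arbitrary monic polynomial $f\in B'[z]$ with $f(d)=0$, and then runs a Jacobson-style division argument (dividing each $z^{p^i}$ by $f$ and using Noetherianity of $B'$ on the remainders) to manufacture a monic $p$-polynomial $g$ divisible by $f$. You bypass both steps by observing directly that the ascending chain $\sum_{k\le r} B_0\,d^{p^k}$ inside the Noetherian $B_0$-module $\mathrm{End}_{B_0}(A)$ must stabilise, which immediately yields $d^{p^r}\in\sum_{k<r} B_0\,d^{p^k}$. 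Your route is slightly more economical---it avoids the detour through an auxiliary non-additive polynomial---while the paper's route has the mild advantage of making the connection to Jacobson's classical lemma explicit. Your closing remark about why monicity over $B_0$ (rather than $\mathrm{Frac}(B_0)$) is essential is exactly the point both arguments are designed to secure.
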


\begin{proof} 
If $d=0$, then $\widetilde{B}=F[t_1,\ldots, t_n,x]$ is a central subring of $A[x]$ and $A[x]$ has rank $p^m$ over $\widetilde{B}$. Thus assume that $d\neq 0$. For any $a\in A$ we have $d(a^p)=0$ as $\mathrm{char}(F)=p$. Thus $B'=F[t_1^{p}, \ldots, t_n^{p}]$ is a central subring of $A[x;d]$. 

Using the hypothesis (2) we have that $A$ has  rank $p^{n+m}$ over $B'$.
Moreover, the derivation $d$ is a $B'$-linear endomorphism of $A$ and 
by the Cayley-Hamilton Theorem \cite[2.4]{AtiyahMacDonalds}, $d$ will satisfy a monic polynomial $f\in B'[z]$.
Similar to \cite[Lemma 1]{Jacobson} one has that $f$ is a factor of a monic $p$-polynomial  $g\in B'[z]$. 
Recall that a $p$-polynomial is a polynomial whose monomials have $p$ powers as exponents. In order to obtain $g$ one divides $z^{p^i}$ by the monic polynomial $f$, for each $i\geq 0$, to obtain polynomials $q_i, r_i \in B'[z]$ such that
$z^{p^i} = q_i f + r_i$ and $r_i=0$ or $\mathrm{deg}(r_i)<\mathrm{deg}(f)$. Since $B'$ is Noetherian, there must exist $k> 0$ such that  $r_k\in \sum_{i=0}^{k-1} B'r_i$. Thus there are  $a_{k-1}, \ldots, a_1 \in B'$ such that
$g = z^{p^k}+\sum_{i=0}^{k-1}a_{i}z^{p^i} = \left(q_k + \sum_{i=0}^{k-1}a_iq_i\right)f$. 
As $f$ is a factor of $g$ we also have $g(d)\equiv 0$.
Set $\Theta:=x^{p^k} + \sum_{i=0}^{k-1}a_i x^{p^i} \in B'[x]\subset A[x;d]$. 
Note that $\Theta$ commutes with powers of $x$ as the coefficients of $\Theta$  are central in $A[x;d]$.
Furthermore, let $a\in A$, then
$$ \Theta a - a \Theta = d^{p^k}(a) + \sum_{i=0}^{k-1} \lambda_i d^{p^i} (a)  = g(d)(a) = 0.$$
Hence $\Theta$ is central in $A[x;d]$. Since $\Theta$ is monic and of positive degree in $x$ we have that $\Theta$ and $t_1^p, \ldots, t_n^p$ are algebraically independent over $F$. Thus they form a central subring $B'[\Theta]=F[t_1^p, \ldots, t_n^p, \Theta]$ of $A[x;d]$. As $A[x;d]$ has rank $p^k$ over $A[\Theta]$ and as $A[\Theta]$ has rank $p^{n+m}$ over $B'[\Theta]$, we conclude that $A[x;d]$ has rank $p^{n+m+k}$ over $B'[\Theta]$, as desired.\end{proof}

Note that if $B$ is a Noetherian central subring of a Noetherian domain $A$ such that $A$ is finitely generated over $B$, then $D:=\mathrm{Frac}(A)$ can be obtained by inverting the elements of $B$. Hence if $A$ is free of rank $p^n$ over $B$, then $[D:\mathrm{Frac}(B)]=p^n$. In particular, $[D:Z]$ is a power of $p$, where $Z$ denotes the centre of $D$.

Let us call an iterated Ore extension $S[x_1;\alpha_1, d_1][x_2;\alpha_2, d_2][\cdots][x_n; \alpha_n, d_n]$ an iterated Ore extension of derivation type over a commutative domain $S$, if all automorphisms $\alpha_i$ are the identity.
 
\begin{cor}\label{iteratedOre} Any action of a semisimple Hopf algebra $H$ over a field $K$ of characteristic zero on an iterated Ore extension of derivation type over a polynomial ring in finitely many variables factors through a group algebra.
\end{cor}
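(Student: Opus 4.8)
The plan is to deduce the statement from Theorem \ref{MainResultCEW}, running Proposition \ref{polynomialsubrings} iteratively to control the centres of the reductions of $A$ modulo maximal ideals. Write $A = K[y_1,\ldots,y_r][x_1;d_1][x_2;d_2]\cdots[x_n;d_n]$. We may assume $r\geq 1$: if $r=0$, then every $K$-derivation of $K$ vanishes, so $d_1=0$ and $A = K[x_1][x_2;d_2]\cdots[x_n;d_n]$ is already an iterated Ore extension of derivation type over the polynomial ring $K[x_1]$. Since a polynomial ring over a field is a Noetherian domain and an Ore extension by a derivation preserves both properties, $A$ is a Noetherian domain; it is finitely presented, its defining relations being the finitely many polynomial identities $[y_i,y_j]=0$, $[x_i,y_j]=d_i(y_j)$ and $[x_i,x_j]=d_i(x_j)$ for $i>j$. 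The ring of structure constants $R$ of $H$ and $A$ from (\ref{structureconstants}) contains, by construction, all coefficients of these defining relations, in particular all coefficients occurring in the polynomials $d_i(y_j)$ and $d_i(x_j)$; consequently $A_R$ is the iterated Ore extension of derivation type over the polynomial ring $R[y_1,\ldots,y_r]$, free as an $R$-module with the evident monomial basis. Hence for every maximal ideal $\m$ of $R$, writing $F=R/\m$ and $p=\mathrm{char}(F)$, the induced algebra $A_\m = A_R\otimes_R R/\m$ is the iterated Ore extension of derivation type $F[y_1,\ldots,y_r][x_1;\overline{d_1}]\cdots[x_n;\overline{d_n}]$ over the polynomial ring $F[y_1,\ldots,y_r]$; in particular it is a Noetherian domain.

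Next I would run Proposition \ref{polynomialsubrings} inside $A_\m$. Start from $A_{\m,0}=F[y_1,\ldots,y_r]$, which is free of rank $p^0$ over itself viewed as a polynomial central subring, so the hypotheses of Proposition \ref{polynomialsubrings} hold with $n=r\geq 1$ and $m=0$. Applying the Proposition to $A_{\m,0}$ with the derivation $\overline{d_1}$, and then successively to $A_{\m,j}=A_{\m,j-1}[x_j;\overline{d_j}]$, using at each stage the polynomial central subring produced at the previous one, yields a chain of polynomial central subrings $C_j\subseteq A_{\m,j}$ over which $A_{\m,j}$ is free of rank $p^{e_j}$ for suitable integers $e_j\geq 0$. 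After $n$ applications one obtains a polynomial central subring $C_n$ of $A_\m=A_{\m,n}$ over which $A_\m$ is free of finite rank $p^{e}$ with $e=e_n$.

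Finally, by the remark following Proposition \ref{polynomialsubrings}, the skew-field of fractions $D_\m=\mathrm{Frac}(A_\m)$ is obtained by inverting the nonzero elements of $C_n$, so $D_\m$ is finite over its centre $Z_\m$ and $[D_\m:Z_\m]$ is a power of $p$. Taking $q=\mathrm{dim}(H)!$, for every maximal ideal $\m$ of $R$ with $\mathrm{char}(R/\m)=p>q$ we have $p\nmid\mathrm{dim}(H)!$, so $[D_\m:Z_\m]$, being a power of $p$, is coprime with $\mathrm{dim}(H)!$. Thus all three conditions in Theorem \ref{MainResultCEW} hold and we conclude that the action of $H$ on $A$ factors through a group algebra.

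The substantive content is entirely in Proposition \ref{polynomialsubrings}, which is already established; the rest is bookkeeping. I expect the only delicate points to be (i) checking that reduction modulo $\m$ is compatible with the Ore presentation, so that $A_\m$ is genuinely an iterated Ore extension of derivation type over a \emph{polynomial} ring over the finite field $F$, and (ii) ensuring the inductive use of Proposition \ref{polynomialsubrings} is launched from an admissible base case, which is precisely why one first reduces to $r\geq 1$. Both are routine once one keeps track of which coefficients the ring $R$ contains.
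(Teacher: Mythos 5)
Your proof is correct and follows essentially the same route as the paper: reduce modulo maximal ideals of the ring of structure constants, iterate Proposition \ref{polynomialsubrings} to produce a central polynomial subring over which $A_\m$ is free of $p$-power rank, invoke the remark following that proposition to get $[D_\m:Z_\m]$ a power of $p$, and apply Theorem \ref{MainResultCEW} with $q=\dim(H)!$. You are in fact somewhat more careful than the paper's own (very terse) proof in spelling out the base case for the induction and the compatibility of the reduction modulo $\m$ with the Ore presentation.
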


\begin{proof}
We might assume $S=K$. Suppose that $H$ acts on A and let $R$ be the ring of structure constants of $H$ and $A$. We can consider the $R$-Hopf algebra $H_R$ acting on the $R$-algebra $A_R = R[x_1;d_1][x_2;d_2][\cdots][x_n; d_n]$. For any maximal ideal $\m$ of $R$ we set $F=R/\m$ and $p=\mathrm{char}(F)$. Then ${A_\m} =A_R {\otimes_R} F = F[x_1;d_1][x_2;d_2][\cdots][x_n; d_n]$ is an iterated Ore extension of derivation type over $F$. By Proposition \ref{polynomialsubrings}, ${A_\m}$ contains a central subring $B$, which is a polynomial ring, such $A_\m$ is free over $B$ with  rank a power of $p$. By the previous argument  $D_\m = \mathrm{Frac}(A_\m)$ has a $p$-power as dimension over its centre. Thus, for $q = \mathrm{dim}(H)!$, Theorem $6$ shows that the Hopf algebra action of $H$ on $A$ factors through a group algebra.
\end{proof}

Corollary \ref{iteratedOre} covers the case of the $n$th Weyl algebra over a polynomial ring, i.e. \cite[Proposition 4.3]{CuadraEtingofWalton}, but also other examples like the Jordan plane $A=\CC[x][y; x^2\frac{\partial}{\partial x}]$, which is a partial generalization of  \cite[Theorem 0.1]{ChanWaltonWangZhang} as the assumption that the $H$-action preserves the filtration of $A$ can be removed.

\begin{bibdiv}
 \begin{biblist}

 \bib{AtiyahMacDonalds}{book}{
    AUTHOR = {{M. F.} Atiyah  and {I. G.} Macdonald},
     TITLE = {Introduction to commutative algebra},
 PUBLISHER = {Addison-Wesley Publishing Co., Reading, Mass.-London-Don
              Mills, Ont.},
      YEAR = {1969},
     PAGES = {ix+128},
   MRCLASS = {13.00},
  MRNUMBER = {0242802 (39 \#4129)},
MRREVIEWER = {J. A. Johnson},
}

\bib{ChanWaltonWangZhang}{article}{
    Author = {K. {Chan} and C. {Walton} and Y.H. {Wang} and J.J. {Zhang}},
    Title = {{Hopf actions on filtered regular algebras.}},
    FJournal = {{Journal of Algebra}},
    Journal = {{J. Algebra}},
    ISSN = {0021-8693},
    Volume = {397},
    Pages = {68--90},
    Year = {2014},
    Publisher = {Elsevier (Academic Press), San Diego, CA},
    DOI = {10.1016/j.jalgebra.2013.09.002},
    MSC2010 = {16T05 16S40 16W22 16W70 16S38 16E65 16T20},
    Zbl = {1306.16026}
}
 
 \bib{CuadraEtingofWalton}{article}{
Author = {Juan Cuadra and Pavel Etingof and Chelsea Walton},
Title = {Semisimple Hopf actions on Weyl algebras},
Year = {2014},
Eprint = {arXiv:1409.1644},
}

\bib{EtingofWalton}{article}{
    AUTHOR = {Pavel Etingof and Chelsea Walton},
     TITLE = {Semisimple {H}opf actions on commutative domains},
   JOURNAL = {Adv. Math.},
  FJOURNAL = {Advances in Mathematics},
    VOLUME = {251},
      YEAR = {2014},
     PAGES = {47--61},
      ISSN = {0001-8708},
   MRCLASS = {16K20 (16T05)},
  MRNUMBER = {3130334},
MRREVIEWER = {Dirceu Bagio},
       DOI = {10.1016/j.aim.2013.10.008},
       URL = {http://dx.doi.org/10.1016/j.aim.2013.10.008},
}

\bib{Goldman}{article}{
    AUTHOR = {Oscar Goldman},
     TITLE = {Hilbert rings and the {H}ilbert {N}ullstellensatz},
   JOURNAL = {Math. Z.},
  FJOURNAL = {Mathematische Zeitschrift},
    VOLUME = {54},
      YEAR = {1951},
     PAGES = {136--140},
      ISSN = {0025-5874},
   MRCLASS = {09.0X},
  MRNUMBER = {0044510 (13,427d)},
MRREVIEWER = {T. Nakayama},
}

\bib{Jacobson}{article}{
    AUTHOR = {Nathan Jacobson},
     TITLE = {A note on {L}ie algebras of characteristic {$p$}},
   JOURNAL = {Amer. J. Math.},
  FJOURNAL = {American Journal of Mathematics},
    VOLUME = {74},
      YEAR = {1952},
     PAGES = {357--359},
      ISSN = {0002-9327},
   MRCLASS = {09.1X},
  MRNUMBER = {0047026 (13,816c)},
MRREVIEWER = {K. Iwasawa},
}

\bib{KadisonStolin}{article}{
    AUTHOR = {Lars Kadison and {A.A.} Stolin},
     TITLE = {Separability and {H}opf algebras},
 BOOKTITLE = {Algebra and its applications ({A}thens, {OH}, 1999)},
    SERIES = {Contemp. Math.},
    VOLUME = {259},
     PAGES = {279--298},
 PUBLISHER = {Amer. Math. Soc., Providence, RI},
      YEAR = {2000},
   MRCLASS = {16W30 (16H05)},
  MRNUMBER = {1780527 (2001g:16079)},
MRREVIEWER = {George Szeto},
       DOI = {10.1090/conm/259/04101},
       URL = {http://dx.doi.org/10.1090/conm/259/04101},
}

\bib{Krull}{article}{
    AUTHOR = {Krull, Wolfgang},
     TITLE = {Jacobsonsche {R}inge, {H}ilbertscher {N}ullstellensatz,
              {D}imensionstheorie},
   JOURNAL = {Math. Z.},
  FJOURNAL = {Mathematische Zeitschrift},
    VOLUME = {54},
      YEAR = {1951},
     PAGES = {354--387},
      ISSN = {0025-5874},
   MRCLASS = {09.1X},
  MRNUMBER = {0047622 (13,903c)},
MRREVIEWER = {I. S. Cohen},
}

\bib{Lomp}{article}{
    AUTHOR = {Lomp, Christian},
     TITLE = {Integrals in {H}opf algebras over rings},
   JOURNAL = {Comm. Algebra},
  FJOURNAL = {Communications in Algebra},
    VOLUME = {32},
      YEAR = {2004},
    NUMBER = {12},
     PAGES = {4687--4711},
      ISSN = {0092-7872},
     CODEN = {COALDM},
   MRCLASS = {16W30 (57T05)},
  MRNUMBER = {2111109 (2005j:16032)},
MRREVIEWER = {Alan Koch},
       DOI = {10.1081/AGB-200036837},
       URL = {http://dx.doi.org/10.1081/AGB-200036837},
}

\bib{Radford}{book}{
    AUTHOR = {Radford, David E.},
     TITLE = {Hopf algebras},
    SERIES = {Series on Knots and Everything},
    VOLUME = {49},
 PUBLISHER = {World Scientific Publishing Co. Pte. Ltd., Hackensack, NJ},
      YEAR = {2012},
     PAGES = {xxii+559},
      ISBN = {978-981-4335-99-7; 981-4335-99-1},
   MRCLASS = {16-01 (16T05 16Txx)},
  MRNUMBER = {2894855},
MRREVIEWER = {Yorck Sommerh{\"a}user},
}

\bib{SkryabinOystaeyen}{article}{
    AUTHOR = {Serge Skryabin  and Freddy {Van Oystaeyen}},
     TITLE = {The {G}oldie {T}heorem for {$H$}-semiprime algebras},
   JOURNAL = {J. Algebra},
  FJOURNAL = {Journal of Algebra},
    VOLUME = {305},
      YEAR = {2006},
    NUMBER = {1},
     PAGES = {292--320},
      ISSN = {0021-8693},
     CODEN = {JALGA4},
   MRCLASS = {16W30},
  MRNUMBER = {2264132 (2007i:16065)},
MRREVIEWER = {Dmitri A. Nikshych},
       DOI = {10.1016/j.jalgebra.2006.06.030},
       URL = {http://dx.doi.org/10.1016/j.jalgebra.2006.06.030},
}

\bib{StradeFarnsteiner}{book}{
    AUTHOR = {Helmut {Strade} and Rolf {Farnsteiner}},
     TITLE = {Modular {L}ie algebras and their representations},
    SERIES = {Monographs and Textbooks in Pure and Applied Mathematics},
    VOLUME = {116},
 PUBLISHER = {Marcel Dekker, Inc., New York},
      YEAR = {1988},
     PAGES = {x+301},
      ISBN = {0-8247-7594-5},
   MRCLASS = {17B50 (17-02 17B05 17B10)},
  MRNUMBER = {929682 (89h:17021)},
MRREVIEWER = {Gordon E. Brown},
}

 \end{biblist}
\end{bibdiv}

\end{document}